\newtheorem{thm}{Theorem}
\newtheorem{lem}[thm]{Lemma}
\newtheorem{defi}[thm]{Definition}
\newtheorem{prop}[thm]{Proposition}
\newtheorem{cor}[thm]{Corollary}
\newtheorem{rmk}[thm]{Remark}
\newtheorem*{thm*}{Theorem}
\newtheorem{Q}{Question}
\theoremstyle{definition}
\newtheorem{ex}{Example}
\newcommand{\ZZ}{\mathbb{Z}}
\newcommand{\PP}{\mathbb{P}}
\definecolor{myorchid}{HTML}{a4538a}
\title{Automorphisms of quartic surfaces and Cremona transformations}
\author[D. Paiva]{Daniela Paiva}
\address{Instituto de Matem\'atica Pura e Aplicada - IMPA\\
        Estrada Dona Castorina, 110 \\
         Rio de Janeiro, RJ 22460-320, Brazil}
\email{da.paiva@impa.br}
\author[A. Quedo]{Ana Quedo}
\address{Instituto de Matem\'atica Pura e Aplicada - IMPA\\
        Estrada Dona Castorina, 110 \\
       Rio de Janeiro, RJ 22460-320, Brazil}
\email{ana.quedo@impa.br}
\begin{document}

\begin{abstract}
In this paper, we consider the problem of determining which automorphisms of a smooth quartic surface $S \subset \mathbb{P}^3$ are induced by a Cremona transformation of $\mathbb{P}^3$. We provide the first steps towards a complete solution of this problem when $\rho(S)=2$. In particular, we give several examples of quartics whose automorphism groups are generated by involutions, but no non-trivial automorphism is induced by a Cremona transformation of $\mathbb{P}^3$, giving a negative answer for Oguiso's question of whether every automorphism of finite order of a smooth quartic surface $S\subset \mathbb{P}^3$ is induced by a Cremona transformation. 
 
\end{abstract}
\maketitle
\section{Introduction}
Given a smooth quartic surface $S\subset \mathbb{P}^3$, Gizatullin posed the following problem \cite{Oguiso1}:

\begin{Q}
    Which automorphisms of $S$ are induced by Cremona transformations of $\mathbb{P}^3$?
\end{Q}

Matsumura and Monsky proved in \cite{Matsumura} that for $n\geq 2$, every automorphism of a smooth hypersurface of degree $d$ in $\mathbb{P}^{n+1}$ is induced by an automorphism of the ambient space, provided that $(n,d)\neq (2,4)$. So this question makes sense for smooth quartic surfaces in $\mathbb{P}^3$.\\

When the smooth quartic surface $S\subset\mathbb{P}^3$ has Picard rank $\rho(S)=1$ the problem is trivially solved since $Aut(S)=\{1\}$ (see for instance \cite[$\S15$, Corollary 2.12]{Huybrechts}). However, the general behaviour for higher Picard number is still unknown, despite known examples constructed by Oguiso. In fact, Oguiso was the first one to address this problem in \cite{Oguiso1} and \cite{Oguiso2}. In \cite{Oguiso1}, he constructed a smooth quartic K3 surface $S$ with $\rho(S)=2$ where no nontrivial element of $Aut(S)$ is induced by a Cremona transformation of $\mathbb{P}^3$. In this case $Aut(S)=\mathbb{Z}$, so every automorphism of $S$ has infinite order.
In particular, based on a more general result of Takahashi \cite{Takagashi}, Oguiso observed that no automorphism of a smooth quartic K3 surface is induced by an element of $Bir(\mathbb{P}^{3}) \backslash Aut(\mathbb{P}^{3})$ if the quartic surface does not contain curves of degree less than 16 that are not complete intersections of the surface with a hypersurface of $\mathbb{P}^{3}$ (Theorem \ref{oguiso}).\\

In this paper, we follow  Oguiso's strategy to study the case of smooth quartic surfaces $S\in\PP^3$ with $\rho(S)=2$. For a smooth quartic surface $S$, we denotee by $disc(S)$ the discriminant of $S$, i.e., the determinant of a matrix representing its
intersection product, and by $G\subset Aut(S)$ the finite index subgroup of all automorphisms of $S$ preserving the unique global $2$-form up to sign (see $\S$\ref{group G}). These objects play an important role in our main result which provides a first step toward a complete solution of Gizatullin's problem for quartic surfaces with Picard number two.

\begin{thm*}
    Let $S$ be a K3 quartic surface with $\rho(S)=2$ such that $|disc(S)|> 233$.
Then no non-trivial element of $Aut(S)$ is derived from $Bir(\mathbb{P}^{3} )\backslash Aut(\mathbb{P}^{3})$ under any embedding $S \hookrightarrow \mathbb{P}^{3}$. Moreover, no non-trivial automorphism in $ G$ is induced by an element of $Bir(\PP^3)$. 
\end{thm*}
Gizatullin's problem for quartic surfaces with $\rho(S)=2$ and $|disc(S)|\leq233$ will be addressed in the forthcoming paper \cite{APZ}.\\
    
In \cite{Oguiso2}, Oguiso constructed a smooth quartic K3 surface $S$ in $\mathbb{P}^3$ with $\rho(S)=3$ such that every element of $Aut(S)$ is derived from a Cremona transformation of the ambient space. In this case $Aut(S)=\mathbb{Z}_2\ast\mathbb{Z}_2\ast\mathbb{Z}_2$, in particular, $Aut(S)$ is generated by elements of finite order. 
Hence, a natural question posed by Oguiso in \cite{Oguiso2} is

\begin{Q} \label{Oguisoquestion}
    Is every automorphism of finite order of any smooth quartic surface $S\subset \mathbb{P}^3$ induced by a Cremona transformation of $\mathbb{P}^3$?
\end{Q}

As a consequence of the main result we obtain a negative answer to Oguiso's question. Indeed, by using the classical theory of binary quadratic forms, Galluzzi, Lombardo and Peters proved in \cite{Galluzzi} that for a K3 surface $S$ with Picard number two the subgroup $G\subset Aut(S)$ falls into four possibilities: trivial, $\mathbb{Z}_2$, $\mathbb{Z}$ or $\mathbb{D}_{\infty}=\mathbb{Z}_{2} \ast \mathbb{Z}_{2}$. Then, it is sufficient to take quartic surfaces where $G=\mathbb{D}_{\infty}$ or $G=\ZZ_2$. Moreover, if the K3 quartic is general (see \ref{autgeral}), the group $G$ is the whole $Aut(S)$ and so, if it satisfies the condition of our main result, Gizatullin's problem is completely solved. More concretely, we construct examples of quartic K3 surfaces with Picard number two
 such that $Aut(S) = \mathbb{D}_{\infty}=\mathbb{Z}_2 * \mathbb{Z}_2 $ or  $Aut(S)= \mathbb{Z}_2$ and no nontrivial element of $Aut(S)$ is induced by Cremona transformations of $\mathbb{P}^3$. In particular, these surfaces contain automorphisms of order two which are not derived from $Bir(\PP^3)$.

The structure of the paper is the following. In section 2, we recall basic definitions and results about K3 surfaces and lattices that will be used in the paper. In section 3, we prove some results for  quartics in $\mathbb{P}^{3}$ with Picard number two.
In section 4, we construct some first results for Gizatullin's problem when the quartic surface has Picard number two. In the last section, we construct our examples.

Throughout this note, we work over the field $\mathbb{C}$ of complex numbers.\\

\textbf{Acknowledgements.} This paper grew from our Ph.D. research work. We would like to thank our Ph.D. advisor Carolina Araujo for presenting us Gizatullin's problem and for the several fruitful discussions that culminated in this article. We also thank Alice Garbagnati and the anonymous referees for their valuable suggestions and careful reading. We extend our gratitude to Paola Comparin, Michela Artebani, Alessandra Sarti, Giacomo Nanni, Pablo Quezada, Roberto Villaflor for insightful discussions related to this problem. Furthermore, We would like to thank CAPES (Coordernação de Aperfeiçoamento de Pessoal de Nível Superior) for the financial support that made this paper possible. 

\section{Preliminaries}
In this short section, we set the notation and state some basic
results that will be used in the paper.
\subsection{K3 surfaces}\label{K3surfaces}
We consider a $K3$ \emph{surface} to be a smooth projective algebraic surface $S$ such that $h^{1}(S,\mathcal{O}_S)=0$ and the canonical divisor $K_S$ is trivial, i.e., there is a nowhere vanishing global 2-form $\omega_S$. This 2-form is a generator of $H^0(S,\Omega_S^2)=\mathbb{C}\omega_S$. 
Examples of K3 surfaces include smooth complete intersections with trivial canonical divisor, such as smooth
quartic surfaces in $\mathbb{P}^3$.

For a K3 surface $S$, the group $NS(S)$ of divisors modulo numerical equivalence, the \emph{Néron-Severi group}, is isomorphic to the group $Pic(S)$ of isomorphisms classes of line bundles over $S$, the \emph{Picard group}.
As a matter of fact, the exact sequence
$$0 \longrightarrow \mathbb{Z} \longrightarrow \mathcal{O}_{S} \longrightarrow \mathcal{O}^{*}_{S} \longrightarrow 1 $$
induces a map $c_1 \colon H^{1}(S, \mathcal{O}^{*}_{S}) \longrightarrow H^{2}(S, \mathbb{Z})$. We have that $Pic(S) \cong H^{1}(S, \mathcal{O}^{*}_{S}) $ and the Néron-Severi group can be defined as the image of this map. Since $H^{1}(S, \mathcal{O}_{S})=0$, the map $c_1$ is injective and we have the desired isomorphism.
Therefore, from now on, we identify $Pic(S)$ with a subgroup of the cohomology group $H^2(S,\mathbb{Z})$, where the intersection form of $Pic(S)$ coincides with the restriction of the cup product. 
The Picard group $Pic(S)$ is a  finitely generated abelian group and its rank $\rho(S)$ is called the \emph{Picard number} of $S$. 

The \emph{positive cone} $P(S)$ is the connected component of $\{x\in Pic(S)\otimes_{\mathbb{Z}} \mathbb{R}|x^2>0\}$ which contains ample classes. The \emph{ample cone} $Amp(S)\subset P(S)$ is the convex cone spanned, over $\mathbb{R}$, by ample line bundles. The next proposition establishes a special relationship for these two cones when $S$ is a projective K3 surface. A reference for this result is \cite[Corollary 1.7, Chapter 8]{Huybrechts}. A ($-2$)-curve on $S$ is an irreducible curve $C$ with $C^2=-2$. In particular, by adjunction, $C\cong\mathbb{P}^1$ . More generally, for $n \in \mathbb{N}$, an $n$-divisor is a divisor D such that $D^{2}= n$.
\\

\begin{prop}\label{amplecriterio}For a projective K3 surface $S$,$$Amp(S)= \{ x \in P(S)| x \cdot C > 0 \text{ for all ($-2$)-curves $C$} \}. $$\end{prop}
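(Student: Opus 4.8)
The plan is to prove the two inclusions separately, the inclusion $Amp(S)\subseteq\{x\in P(S)\mid x\cdot C>0\text{ for all }(-2)\text{-curves }C\}$ being the elementary one. If $x$ is an ample class, then $x\in P(S)$ by the very definition of the positive cone as the component of $\{x^2>0\}$ containing the ample classes; and $x\cdot C>0$ for every irreducible curve $C$, in particular every $(-2)$-curve, by the Nakai–Moishezon criterion (or simply because ampleness is preserved under the intersection pairing with effective curves). Since $Amp(S)$ is the real cone spanned by ample bundles and the conditions $x\in P(S)$ and $x\cdot C>0$ are stable under positive linear combinations and limits onto the open locus, this inclusion passes to the closure/cone without difficulty.

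For the reverse inclusion, let $x\in P(S)$ satisfy $x\cdot C>0$ for every $(-2)$-curve $C$; I want to conclude $x$ is ample. By Nakai–Moishezon it suffices to show $x\cdot D>0$ for every irreducible curve $D$ on $S$ (we already have $x^2>0$ since $x\in P(S)$). Here the Riemann–Roch theorem on a K3 surface is the key input: for a divisor class $D$ one has $\chi(\mathcal{O}_S(D))=2+\tfrac12 D^2$, so if $D^2\geq -2$ then either $D$ or $-D$ is effective (after possibly reflecting). Thus an irreducible curve $D$ on $S$ has $D^2\geq -2$ by adjunction, with $D^2=-2$ exactly when $D$ is a $(-2)$-curve, and $D^2\geq 0$ otherwise. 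In the case $D^2=-2$ the hypothesis gives $x\cdot D>0$ directly. In the case $D^2\geq 0$, the class $D$ lies in the closure of the positive cone; since $x$ also lies in the positive cone, one invokes the fact that the intersection form has signature $(1,\rho-1)$ (Hodge index theorem) so that the light cone $\{y^2\geq 0\}$ splits into two halves and two elements of the \emph{same} half-cone pair non-negatively, with strict positivity as soon as one of them lies in the open cone. Since $x\in P(S)$ is in the open positive cone and $D$ is a nonzero effective class in the closed positive cone lying in the same component (it is a limit of ample-positive things, or one argues $x\cdot D\ge 0$ and rules out $=0$ using $x^2>0$ and Hodge index), we get $x\cdot D>0$.

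The step I expect to require the most care is precisely this last point: checking that an irreducible curve $D$ with $D^2\ge 0$ lies in the \emph{same} half of the positive cone as the ample classes, so that $x\cdot D>0$ rather than $x\cdot D<0$. This is where one must use that $S$ is \emph{projective}: fixing an ample class $H$, one has $H\cdot D>0$ for the effective curve $D$, which pins down which component of $\{y^2\ge 0\}$ the class $D$ sits in, and then the Hodge index inequality $(x\cdot D)^2\ge x^2\, D^2\ge 0$ together with the sign information forces $x\cdot D>0$ (the equality $x\cdot D=0$ would force $D^2\le 0$ by Hodge index, hence $D^2=0$, and then $x\cdot D=0$ with $x^2>0$ contradicts Hodge index unless $D=0$). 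Assembling these cases gives $x\cdot D>0$ for all irreducible curves, and Nakai–Moishezon finishes the proof. I would also remark that this is exactly \cite[Corollary 1.7, chapter 8]{Huybrechts}, so in the paper itself the proof can be compressed to a citation.
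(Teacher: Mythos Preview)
The paper does not supply its own proof of this proposition: it is stated as a known fact with the sentence ``A reference for this result is \cite[Corollary 1.7, chapter 8]{Huybrechts}'' and nothing more. Your sketch is the standard argument behind that reference (adjunction to see that every irreducible curve has self-intersection $\ge -2$, Hodge index and the projectivity of $S$ to handle the curves with $D^2\ge 0$, and Nakai--Moishezon to conclude), and it is correct. You even anticipated the paper's approach in your final sentence, so there is nothing to reconcile: your write-up simply unpacks what the paper chose to cite. The only cosmetic point is that $x$ is an $\mathbb{R}$-class, so if one wants to be fastidious one should either invoke the $\mathbb{R}$-divisor form of Nakai--Moishezon or first run the argument for rational classes and then use that both sides are open cones; this is routine and does not affect the substance of your proof.
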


Given a nef line bundle
$H \in Pic(S)$, the next theorem determines necessary and sufficient conditions for $H$ to be very ample. This result is a combination of results proven in \cite{SaintDonat}, but the following formulation can be found in \cite[Theorem 5]{Mori}.

\begin{thm}\label{Mori}
     Let $S$ be a K3 surface and $H$ a nef line bundle with $H^2\geq 4$. Then, $H$ is very ample if and only if it satisfies the following 3 conditions:
     \begin{itemize}
      
      \item There is no irreducible curve $E$ such that $E^{2}=0$ and
      $H \cdot E\in\{1,2\}$.
      \item There is no irreducible curve $E$ such that $E^{2}=2$, and $H \sim 2E$.
      \item There is no irreducible curve $E$ such that $E^{2}=-2$ and
      $E \cdot H=0$.
         
     \end{itemize}
\end{thm}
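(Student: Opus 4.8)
The plan is to prove the two implications separately. The forward direction is essentially formal, so I would dispose of it first; the converse is the real content, and there I would follow Saint-Donat's structure theory for linear systems on K3 surfaces (which is why the result is cited rather than reproved). Set $g=1+H^2/2\geq 3$; Riemann--Roch on $S$, together with $h^0(S,-H)=0$ (since $H^2>0$) and $h^1(S,H)=0$ (Mumford vanishing, once $H$ is known to be nef and big), gives $\dim|H|=g$, so $\varphi_{|H|}$ maps $S$ into $\mathbb{P}^g$.

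For necessity, assume $\varphi_{|H|}$ is a closed embedding. Then $H$ is ample, so $H\cdot C>0$ for every curve $C$, which is the third condition. If $E$ is an irreducible curve with $E^2=0$, adjunction gives $p_a(E)=1$, so $\varphi_{|H|}(E)$ is an irreducible curve of degree $H\cdot E$ and arithmetic genus $1$; since an irreducible curve of degree $1$ or $2$ in $\mathbb{P}^g$ is a line or a smooth conic, hence rational, we get $H\cdot E\geq 3$, the first condition. If $E$ is an irreducible curve with $E^2=2$, then $|E|$ is base-point free of dimension $2$ and $\varphi_{|E|}\colon S\to\mathbb{P}^2$ is a double cover with $(\varphi_{|E|})_{*}\mathcal{O}_S=\mathcal{O}_{\mathbb{P}^2}\oplus\mathcal{O}_{\mathbb{P}^2}(-3)$; if in addition $H\sim 2E$, this forces $H^0(S,2E)\cong H^0(\mathbb{P}^2,\mathcal{O}(2))$ and exhibits $\varphi_{|H|}$ as the composite of $\varphi_{|E|}$ with the second Veronese embedding, hence as a $2$-to-$1$ map onto its image, contradicting injectivity — this is the second condition.

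For sufficiency, assume the three conditions; I would argue in four steps, after the preliminary reduction (which I take for granted) to the case where $H$ is nef, obtained from the third condition together with Saint-Donat's description of the fixed part of $|H|$. First, $H$ is ample: it lies in $P(S)$ since $H^2>0$, and $H\cdot E>0$ for every $(-2)$-curve $E$ (nonnegative by nefness, nonzero by the third condition), so $H\in Amp(S)$ by the Proposition above; in particular $\varphi_{|H|}$ contracts no curve. Second, $|H|$ is base-point free: by Saint-Donat a nef and big line bundle on a K3 surface has base points only if some elliptic curve $E$ satisfies $H\cdot E=1$, excluded by the first condition — so $\varphi_{|H|}\colon S\to X=\varphi_{|H|}(S)$ is a finite morphism. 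Third, $\varphi_{|H|}$ is birational onto $X$: again by Saint-Donat, a base-point-free system with $H^2\geq 4$ gives a $2$-to-$1$ map only if some elliptic curve satisfies $H\cdot E=2$ or $H\sim 2E$ with $E^2=2$, both excluded by the first two conditions. Fourth, $X$ is projectively normal (hence normal): a general $C\in|H|$ is a smooth irreducible curve with $H|_C\cong\omega_C$ of degree $2g-2$ by adjunction, so $\varphi_{|H|}|_C$ is the canonical embedding of the non-hyperelliptic curve $C$, which is projectively normal by Noether's theorem; feeding this into the exact sequences $0\to\mathcal{O}_S((k-1)H)\to\mathcal{O}_S(kH)\to\omega_C^{\otimes k}\to 0$ and using $h^1(S,(k-1)H)=0$ for $k\geq 1$, an induction on $k$ shows $H^0(\mathbb{P}^g,\mathcal{O}(k))\twoheadrightarrow H^0(S,kH)$ for all $k\geq 0$. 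Then $\varphi_{|H|}\colon S\to X$ is a finite birational morphism onto a normal variety, hence an isomorphism by Zariski's Main Theorem, so $\varphi_{|H|}$ is a closed embedding and $H$ is very ample.

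The main obstacle lies in the two appeals to Saint-Donat in the second and third steps: pinning down exactly when $|H|$ acquires base points and when $\varphi_{|H|}$ becomes hyperelliptic requires the base-point-free pencil trick, Clifford's theorem applied to the restriction of $H$ to curves in $|H|$, and a delicate case analysis around the small values $H\cdot E\in\{1,2\}$ and the exceptional locus $H^2=8$; the projective-normality step additionally rests on Noether's theorem for canonical curves. These are exactly the inputs assembled in \cite{SaintDonat}, and are what make the theorem nontrivial despite its clean statement; the remaining verifications are routine bookkeeping with the intersection form.
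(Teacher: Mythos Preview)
The paper does not give its own proof of this theorem: it is quoted in the preliminaries as a known result, attributed to Saint-Donat with the specific formulation taken from \cite[Theorem~5]{Mori}, and no argument is supplied. There is therefore nothing in the paper to compare your proposal against.

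Your outline is, in broad strokes, a faithful sketch of the Saint-Donat argument and would be appropriate if one wanted to include a proof. The necessity direction is handled correctly. For sufficiency, the four-step plan (ampleness, base-point freeness, birationality, projective normality plus Zariski's Main Theorem) is the standard route, and you correctly identify that the substantive content is concentrated in Saint-Donat's classification of when $|H|$ has base points or is hyperelliptic. One point you gloss over deserves more care: the ``preliminary reduction'' to $H$ nef. The third condition only excludes $(-2)$-curves $E$ with $E\cdot H=0$, not $E\cdot H<0$; to rule out the latter from effectivity of $H$ and the stated hypotheses one really does need Saint-Donat's analysis of the fixed components of $|H|$ (showing that peeling off fixed $(-2)$-curves eventually produces a contradiction with one of the three conditions), and this is not quite as automatic as your parenthetical suggests. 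But since the paper itself simply cites the result, this is a remark about exposition rather than a discrepancy with the paper.
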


\subsection{Lattices}
A \emph{lattice} $L$ is a free $\mathbb{Z}$-module with finite rank together with a symmetric bilinear form $(\ ,\ ):L\times L\longrightarrow\mathbb{Z}$. A lattice is called \emph{even} if $(x,x)\in 2\mathbb{Z}$ for all $x\in L$. The determinant of an intersection matrix is called the \emph{discriminant} of $L$ denoted by $discr(L)$. If $discr(L)\neq0$, $L$ is called \emph{non-degenerate} and if $discr(L)=\pm 1$ it is called \emph{unimodular}. The bilinear form $(\ ,\ )$ can be extended to a symmetric bilinear form on the real vector space $L\otimes\mathbb{R}$. Assuming the lattice $L$ is non-degenerate, the bilinear form on $L \otimes \mathbb{R}$ is equivalent
to that represented by a diagonal matrix whose nontrivial entries are either 1 or $- 1$. The \emph{signature} of $L$ is the pair $(n_+,n_-)$, where $n_{\pm}$ is the number of $\pm 1$ on the diagonal. An example of an even lattice is the Picard group $Pic(S)$ of a K3 surface $S$ together with its intersection form, which is called the \textit{Picard Lattice of S}. We denote by $disc(S)$ the discriminant of $Pic(S)$.
As a matter of fact, Morrison proved in \cite[Corollary 2.9]{Morrison} the following result:

\begin{thm} \label{existenciak3}
Every even lattice $L$ of signature (1, $\rho -1$) with $\rho \leq 10$ occurs as the Picard Lattice of some K3 surface.
\end{thm}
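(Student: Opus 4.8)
The plan is to prescribe a Hodge structure of K3 type on the abstract K3 lattice in such a way that its $(1,1)$-part meets the integral lattice exactly in a copy of $L$, and then appeal to the surjectivity of the period map. Recall the two standard inputs. First, for every K3 surface $S$ the lattice $H^2(S,\mathbb{Z})$ with the cup-product form is isometric to the fixed even unimodular lattice $\Lambda:=U^{\oplus 3}\oplus E_8(-1)^{\oplus 2}$ of signature $(3,19)$, and, by the Lefschetz $(1,1)$-theorem together with $H^1(S,\mathcal{O}_S)=0$, one has $Pic(S)=H^2(S,\mathbb{Z})\cap H^{1,1}(S)$, which under a marking equals $\omega^{\perp}\cap\Lambda$, where $\mathbb{C}\,\omega=H^{2,0}(S)$. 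Second, the period map is surjective: every $[\omega]$ in the period domain $\Omega:=\{[\omega]\in\mathbb{P}(\Lambda\otimes\mathbb{C}):\omega^2=0,\ \omega\bar\omega>0\}$ is the period of some marked K3 surface.

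First I would embed $L$ primitively into $\Lambda$. Since $L$ has signature $(1,\rho-1)$ and $\Lambda$ has signature $(3,19)$, the signature inequalities $1\le 3$ and $\rho-1\le 19$ hold, and the remaining numerical hypotheses of Nikulin's theorem on primitive embeddings into an even unimodular lattice reduce to a bound on the length $\ell(A_L)$ of the discriminant group in terms of $\mathrm{rk}\,\Lambda-\mathrm{rk}\,L=22-\rho$; as $\ell(A_L)\le\mathrm{rk}\,L=\rho$, this is guaranteed precisely by the hypothesis $\rho\le 10$. Fix such a primitive embedding and set $M:=L^{\perp}\subset\Lambda$. Primitivity gives $M^{\perp}=L$, and $M$ has signature $(2,20-\rho)$, so the subdomain $\Omega_M:=\{[\omega]\in\Omega:\omega\in M\otimes\mathbb{C}\}$ is a nonempty complex manifold of dimension $20-\rho$.

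Next I would choose the period generically in $\Omega_M$. For each $v\in\Lambda\setminus L$ the locus $\{[\omega]\in\Omega_M:\omega\cdot v=0\}$ is a proper analytic subset of $\Omega_M$, properness holding because $v\notin M^{\perp}=L$ forces $v^{\perp}\cap M\subsetneq M$. Since $\Lambda\setminus L$ is countable, there exists $[\omega]\in\Omega_M$ lying outside all of these subsets, and for such $[\omega]$ one has $\omega^{\perp}\cap\Lambda=L$. By surjectivity of the period map there is a marked K3 surface $S$ with period $[\omega]$, and then $Pic(S)=\omega^{\perp}\cap\Lambda=L$. Finally $S$ is projective: $L$ has signature $(1,\rho-1)$, hence contains a class of positive self-intersection, and a K3 surface whose Picard lattice contains such a class is projective. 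Thus $L$ is the Picard lattice of a projective K3 surface; this is \cite[Corollary 2.9]{Morrison}.

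I expect the only genuine obstacle to be the lattice-theoretic bookkeeping: verifying that Nikulin's existence theorem for primitive embeddings (and, if one also wants the constructed surface essentially unique, its refinement with the stronger length bound) really does apply under $\rho\le 10$, which is exactly the step that dictates the numerical hypothesis. The surjectivity of the period map is quoted, and the genericity argument producing $[\omega]$ with $\omega^{\perp}\cap\Lambda=L$ is routine.
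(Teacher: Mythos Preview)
The paper does not prove this statement at all; it is quoted as \cite[Corollary 2.9]{Morrison} and used as a black box. Your sketch is the standard argument behind Morrison's result (Nikulin primitive embedding into the K3 lattice, generic choice of period in $\Omega_M$, surjectivity of the period map), and it is correct, so there is nothing to compare beyond noting that you have supplied what the paper merely cites.
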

More precisely, there exists a $(20-\rho)$-dimensional
family of K3 surfaces with Picard lattice isomorphic to $L$.\\

A lattice $L$ is called \emph{$p$-elemetary}, for a prime $p$, if $A(L)\cong (\ZZ\slash p\ZZ)^a$, for some positive integer $a$. %An example of such lattices is the following. Consider the lattice $L=\langle p\rangle$, i.e., $L=\ZZ D$, where $D^2=p$. Thus $A(L)=\ZZ/p\ZZ$. 
The \emph{orthogonal group} $O(L)$ is the group of all isomorphisms of $L$ preserving the bilinear form, the so-called \emph{isometries}. 
The \emph{dual lattice} of $L$ is $$L^*=\{x\in L\otimes\mathbb{Q}|(x,y)\in\mathbb{Z}\ \text{for any }y\in L\}.$$We have that $L\subset L^*$ and the quotient $A(L):=L^{*}/L$ is called the \emph{discriminant group}. Any isometry $\varphi$ on $L$ induces an automorphism $\bar{\varphi}$ on $A(L)$. \\
Suppose that $L$ is an even {unimodular} lattice. A sublattice $L_1$ is called \emph{primitive} if $L/L_1$ is free. In this case, taking $L_2$ as the orthogonal complement of $L_1$ in $L$, we have a natural isomorphism $A(L_1)\cong A(L_2)$.
\begin{prop}
    Let $L,L_1$ and $L_2$ be as above. Suppose that $\varphi\in O(L)$ preserves $L_1$ and $L_2$, so the restriction $\varphi_i=\varphi|_{L_i}$ to $L_i$ is an element of $O(L_i)$ for $i=1,2$. Then, $\bar{\varphi}_1=\bar{\varphi}_2$ under the identification $A(L_1)\cong A(L_2)$.   
\end{prop}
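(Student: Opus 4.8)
The plan is to make the phrase ``the natural identification $A(L_1)\cong A(L_2)$'' completely explicit and then verify compatibility with $g$ by a short diagram chase. Let $\pi_i\colon L\otimes\mathbb{R}\to L_i\otimes\mathbb{R}$ denote the orthogonal projections (note $\pi_i(L)\subseteq L_i^*$). First I would record the standard fact that, because $L$ is even unimodular and $L_1$ (hence $L_2=L_1^\perp$) is primitive, the maps $L\to A(L_i)$ sending $x\mapsto \pi_i(x)\bmod L_i$ induce isomorphisms $L/(L_1\oplus L_2)\xrightarrow{\ \sim\ }A(L_i)$ for $i=1,2$. Injectivity is immediate from primitivity of $L_2$ (resp.\ $L_1$): if $\pi_1(x)\in L_1$ then $\pi_2(x)=x-\pi_1(x)$ lies in $(L_2\otimes\mathbb{Q})\cap L=L_2$, so $x\in L_1\oplus L_2$. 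For surjectivity one compares orders: unimodularity of $L$ gives $[L:L_1\oplus L_2]^2=|A(L_1)|\cdot|A(L_2)|$, while injectivity already forces $[L:L_1\oplus L_2]\le|A(L_i)|$ for each $i$; together these force $[L:L_1\oplus L_2]=|A(L_1)|=|A(L_2)|$ and both maps to be isomorphisms. The identification in the statement is then, by definition, the composite $\gamma\colon A(L_1)\xrightarrow{\sim}L/(L_1\oplus L_2)\xrightarrow{\sim}A(L_2)$; concretely, if $\bar\xi\in A(L_1)$ is represented by $\pi_1(x)$ with $x\in L$, then $\gamma(\bar\xi)$ is represented by $\pi_2(x)$.

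Next I would observe that $g$ preserves the rational subspaces $L_1\otimes\mathbb{Q}$ and $L_2\otimes\mathbb{Q}$ and, being an isometry, commutes with the orthogonal projections: for $x\in L$ the decomposition $g(x)=g(\pi_1 x)+g(\pi_2 x)$ has $g(\pi_i x)\in L_i\otimes\mathbb{Q}$, so uniqueness of the orthogonal decomposition gives $\pi_i(gx)=g(\pi_i x)=g_i(\pi_i x)$, where $g_i$ is extended $\mathbb{Q}$-linearly. Reducing modulo $L_i$, this says exactly that the projection map $L\to A(L_i)$ intertwines the induced $g$-action on $L/(L_1\oplus L_2)$ with $\bar g_i$ on $A(L_i)$.

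Combining the two points is then a one-line computation: for $\bar\xi\in A(L_1)$ choose $x\in L$ with $\bar\xi=\pi_1(x)\bmod L_1$, and compute
\[
\gamma\big(\bar g_1(\bar\xi)\big)=\gamma\big(\pi_1(gx)\bmod L_1\big)=\pi_2(gx)\bmod L_2=\bar g_2\big(\pi_2(x)\bmod L_2\big)=\bar g_2\big(\gamma(\bar\xi)\big),
\]
so $\gamma\circ\bar g_1=\bar g_2\circ\gamma$, which is precisely the assertion that $\bar g_1=\bar g_2$ under the identification. The only real content lies in the first paragraph, namely pinning down which isomorphism $A(L_1)\cong A(L_2)$ is meant and checking it is an isomorphism; surjectivity of $L/(L_1\oplus L_2)\to A(L_i)$ is the step where unimodularity of $L$ is genuinely used, and it is the point I would be most careful about. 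Once that is in place, compatibility with $g$ is automatic from the fact that $g$ commutes with orthogonal projection.
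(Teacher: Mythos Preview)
Your proof is correct and complete. The paper itself does not prove this proposition: it is stated as a preparatory fact before the Gluing Theorem (with a reference to Nikulin for the converse), so there is no argument in the paper to compare against. What you have written is precisely the standard Nikulin-style proof: realize both discriminant groups as $L/(L_1\oplus L_2)$ via the orthogonal projections, use unimodularity of $L$ together with the index formula $[L:L_1\oplus L_2]^2=|A(L_1)|\cdot|A(L_2)|$ to see that both projection maps are isomorphisms, and then observe that an isometry preserving the summands commutes with the projections, which makes the diagram commute.

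Two very minor remarks. First, you set up $\pi_i$ over $\mathbb{R}$ but then immediately (and correctly) work over $\mathbb{Q}$; it would be cleaner to define the projections on $L\otimes\mathbb{Q}$ from the start, since that is where the argument lives. Second, primitivity of $L_2$ is automatic here because $L_2=L_1^\perp$ is saturated in $L$, so the equality $(L_2\otimes\mathbb{Q})\cap L=L_2$ you use for injectivity needs no extra hypothesis; you might say this explicitly. Neither point affects the validity of the argument.
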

The following theorem, which can be found in \cite[Corollary 1.5.2]{Nikulin}, gives the converse.

\begin{thm}[Gluing isometries]\label{gluing} Let $L,L_1$ and $L_2$ be as above and let $\varphi_1,\varphi_2$ be isometries of $L_1$ and $L_2$, respectively. If $\bar{\varphi}_1=\bar{\varphi}_2$ under the identification $A(L_1)\cong A(L_2)$, then there exists an isometry $\varphi$ on $L$ whose restrictions to $L_1$ and $L_2$ are $\varphi_1$ and $\varphi_2$, respectively.    
\end{thm}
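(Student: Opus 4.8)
The plan is to realize $L$ explicitly as an overlattice of $L_1\oplus L_2$ and then check that $g_1\oplus g_2$ preserves it. First I would recall the structure of primitive embeddings into even unimodular lattices: since $L_1$ is primitive in $L$ and $L_2=L_1^{\perp}$, one has $L_1\oplus L_2\subseteq L\subseteq L_1^{*}\oplus L_2^{*}$ with $L/(L_1\oplus L_2)$ finite, and the two coordinate projections induce isomorphisms of $L/(L_1\oplus L_2)$ onto $A(L_1)$ and onto $A(L_2)$ (this is where unimodularity of $L$ is used). Composing them gives the glue map $\gamma\colon A(L_1)\to A(L_2)$, which is precisely the identification $A(L_1)\cong A(L_2)$ appearing in the statement, and concretely $L=\{(x,y)\in L_1^{*}\oplus L_2^{*}\ :\ \gamma(x+L_1)=y+L_2\}$.

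Next I would note that any isometry $g_i\in O(L_i)$ extends uniquely to an isometry of $L_i\otimes\mathbb{Q}$, hence restricts to an isometry of the dual lattice $L_i^{*}\subseteq L_i\otimes\mathbb{Q}$, and that the map it induces on $A(L_i)=L_i^{*}/L_i$ is exactly $\bar{g}_i$. Therefore $g_1\oplus g_2$ is a well-defined isometry of $L_1^{*}\oplus L_2^{*}$ that preserves $L_1\oplus L_2$, and the only thing left to prove is that it preserves the intermediate lattice $L$.

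For that final step, take $(x,y)\in L$, so that $y+L_2=\gamma(x+L_1)$. Then $(g_1\oplus g_2)(x,y)=(g_1x,g_2y)$, and I need $g_2y+L_2=\gamma(g_1x+L_1)$, i.e. $\bar{g}_2(y+L_2)=\gamma(\bar{g}_1(x+L_1))$. Using $y+L_2=\gamma(x+L_1)$ this reduces to the intertwining identity $\bar{g}_2\circ\gamma=\gamma\circ\bar{g}_1$, which is exactly the hypothesis $\bar{g}_1=\bar{g}_2$ read through the identification $\gamma$. Hence $(g_1\oplus g_2)(L)=L$, and $g:=(g_1\oplus g_2)|_{L}$ is an isometry of $L$ whose restrictions to $L_1$ and $L_2$ are $g_1$ and $g_2$ by construction.

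The main difficulty is bookkeeping rather than a genuine obstacle: one must carefully set up the overlattice dictionary (Nikulin's gluing description), in particular that $L/(L_1\oplus L_2)$ maps isomorphically onto each discriminant group when the ambient lattice is unimodular, and be attentive to whether $\gamma$ is an isometry or an anti-isometry of the discriminant forms. That sign is irrelevant here, since we only need $\gamma$ to transport the automorphisms $\bar{g}_i$, which it does compatibly in either case; once the dictionary is fixed, the proof is the one-line intertwining check above.
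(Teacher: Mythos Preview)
The paper does not actually prove this theorem; it merely states it and cites \cite[Corollary 1.5.2]{Nikulin}. Your argument is correct and is essentially the standard proof one finds in Nikulin: realize $L$ as the overlattice of $L_1\oplus L_2$ determined by the glue isomorphism $\gamma\colon A(L_1)\to A(L_2)$, and then observe that $g_1\oplus g_2$ preserves this overlattice precisely when $\bar g_2\circ\gamma=\gamma\circ\bar g_1$, which is the hypothesis. There is nothing to correct.
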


\subsection{Automorphisms of K3 surfaces}\label{AutK3}

For a projective K3 surface $S$, $H^2(S,\mathbb{Z})$ with the cup product and $Pic(S)$ with the intersection form are even and non-degenerate lattices with signatures $(3,19)$ and $(1,\rho(S)-1)$, respectively. {Under the identification $Pic(S)\subset H^2(S,\ZZ)$ in $\S$\ref{K3surfaces}}, we view $Pic(S)$ as the primitive sublattice $\{x\in H^2(S,\ZZ)|(x,\omega_S)=0\}$ of $H^2(S,\mathbb{Z})$. The orthogonal complement of $Pic(S)$ in $H^2(S,\mathbb{Z})$ is called the \emph{transcendental lattice} of $S$ and is denoted by $T(S)$. As we have seen in the previous subsection, there is a natural isomorphism $A(Pic(S))\cong A(T(S))$. Moreover, $H^2(S,\mathbb{Z})$ and $T(S)$ have a weight two Hodge structure $H^2(S,\mathbb{C})=H^{2,0}(S)\oplus H^{1,1}(S)\oplus H^{0,2}(S)$ and $T_\mathbb{C}(S):=T(S)\otimes \mathbb{C}=T^{2,0}(S)\oplus T^{1,1}(S)\oplus T^{0,2}(S)$, respectively, where $H^{0}(S,\Omega_S^2)=H^{2,0}(S)=T^{2,0}(S)\subset T_\mathbb{C}(S)$.\\ 

An automorphism $g\in Aut(S)$ induces an isometry $g^{*}$ of $H^{2}(S,\mathbb{Z})$ preserving $Pic(S)$, $T(S)$ and induces an isomorphism $g^{*}_\mathbb{C}$ of $H^{2}(S,\mathbb{C})$ preserving its Hodge structure. The next result tells us that studying automorphisms of $S$ can be translated into studying isometries of the lattice $H^2(S,\mathbb{Z})$. A reference for this theorem is \cite[Chapter 7, Theorem 5.3]{Huybrechts}

\begin{thm}[Global Torelli Theorem]\label{Torelli} Let $\varphi:H^{2}(S,\mathbb{Z})\longrightarrow H^2(S,\mathbb{Z})$ be an isometry that preserves the ample cone $Amp(S)$ and such that $\varphi_\mathbb{C}(H^{2,0}(S))=H^{2,0}(S)$. Here, $\varphi_{\mathbb{C}}$ denotes the natural extension of $\mathbb{\varphi}$ to $H^{2}(S, \mathbb{C})$. Then there exists a unique automorphism $g$ of $S$ such that $g^*=\varphi$.
    
\end{thm}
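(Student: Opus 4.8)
The statement is the Global Torelli Theorem for K3 surfaces, so a genuinely complete proof is long; the plan is to split it into a uniqueness statement and an existence statement, reducing each to a known deep input. For uniqueness it is enough to show that the representation $\mathrm{Aut}(S)\to O(H^2(S,\mathbb{Z}))$, $f\mapsto f^*$, is injective, since then at most one $f$ can satisfy $f^*=\varphi$. So suppose $f^*=\mathrm{id}$. Then $f$ fixes the class of an ample line bundle $L$, hence lies in $\mathrm{Aut}(S,L)$, a group scheme of finite type whose identity component has Lie algebra contained in $H^0(S,T_S)=0$; therefore $\mathrm{Aut}(S,L)$ is finite and $f$ has finite order. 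Since $f^*_{\mathbb{C}}$ acts trivially on $H^{2,0}(S)=\mathbb{C}\omega_S$, the automorphism $f$ is symplectic, so if $f\neq\mathrm{id}$ then at each fixed point $f$ acts on the tangent plane with eigenvalues $\zeta,\zeta^{-1}$ for a nontrivial root of unity $\zeta$, hence $\mathrm{Fix}(f)$ is a non-empty finite set. The topological Lefschetz fixed point formula then gives $\#\mathrm{Fix}(f)=\sum_i(-1)^i\mathrm{tr}\bigl(f^*|_{H^i(S,\mathbb{Q})}\bigr)=1-0+22-0+1=24$, contradicting Nikulin's classification of finite-order symplectic automorphisms of K3 surfaces, whose fixed loci are much smaller (e.g.\ $8$ points in order $2$). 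Hence $f=\mathrm{id}$ and the representation is injective.

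For existence I would follow the classical route (Piatetski-Shapiro--Shafarevich in the projective case, Burns--Rapoport and Looijenga--Peters in general), built on two analytic cornerstones: the Local Torelli theorem, stating that the period map from the Kuranishi family of $S$ to the period domain $\Omega\subset\mathbb{P}(\Lambda\otimes\mathbb{C})$ ($\Lambda$ the K3 lattice) is a local isomorphism, and the surjectivity of the period map. Fixing a marking $H^2(S,\mathbb{Z})\cong\Lambda$ turns $\varphi$ into an element of $O(\Lambda)$ that fixes the period point of $S$ and maps its ample cone to itself. One then combines these facts with Torelli for Kummer surfaces --- which reduces to the elementary Torelli theorem for complex $2$-tori, since $\mathrm{Km}(T)$ is determined by $T$ and $T$ is recovered from the weight-one Hodge structure on $H^1(T,\mathbb{Z})$ --- and a deformation argument inside the moduli space of marked K3 surfaces to propagate the isomorphism produced in the Kummer case down to $S$ itself. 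The hypothesis that $\varphi$ preserves the \emph{ample} cone, rather than merely the positive cone, is precisely what forces the resulting isomorphism $f$ to satisfy $f^*=\varphi$ exactly, instead of only up to composition with a product of reflections in classes of $(-2)$-curves, which act simply transitively on the chambers of the positive cone, the ample cone being one of them.

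The hard part is the existence statement, and within it the global step: the moduli space of marked K3 surfaces is not Hausdorff --- non-isomorphic K3 surfaces can share a period once the Kähler cone is forgotten --- so one must carry the ample-cone data along the entire deformation and invoke a separatedness property of the refined moduli problem in order to guarantee that the limiting isomorphism descends to the central fibre with the prescribed action on cohomology. Setting up the surjectivity of the period map, Torelli for Kummer surfaces, and the continuity arguments linking the general case to the Kummer case are by now standard but non-trivial; the entire package is exactly what is recorded in \cite[Chapter 7, Theorem 5.3]{Huybrechts}, which we take as given.
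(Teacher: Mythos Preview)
The paper does not prove this theorem at all: it is stated as background and simply attributed to \cite[Chapter 7, Theorem 5.3]{Huybrechts}. Your proposal is a correct high-level sketch of the classical Piatetski-Shapiro--Shafarevich/Burns--Rapoport argument (injectivity via the Lefschetz fixed point count and Nikulin's classification, existence via Local Torelli, surjectivity of the period map, and reduction to the Kummer case), and in the end you too defer to the same reference. So there is no discrepancy: the paper treats the Global Torelli Theorem as a black box, and your write-up is consistent with that, merely adding an outline that the paper omits.
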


An automorphism $g\in Aut(S)$ is called \emph{symplectic} if it acts trivially on $H^{2,0}(S)$ i.e. $g^*_\mathbb{C}(\omega_S)=\omega_S$, and it is called \emph{anti-symplectic} if $g^*_\mathbb{C}(\omega_S)=-\omega_S$.
Any isometry of $T(S)$ preserving its Hodge structure is determined by its action on $H^{2,0}(S)$ (see \cite[Chapter 3, Lemma 3.3]{Huybrechts}). Moreover, an automorphism $g$ of $S$ is symplectic (respectively, anti-symplectic) if and only if $\bar{g}^*=Id$ (respectively, $\bar{g}^*=-Id$)  on $A(T(S))$.
Moreover, if $\bar{g^{*}} $ acts either as $Id$ or as $-Id$ on $A(T(S))$, the same is true on $A(Pic (S))$. When an automorphism has order two  the following holds (see \cite[\S 15.1, pg.\ 310]{Huybrechts} and \cite[Lemma 1]{Bini}).
\begin{prop}\label{simpleticinv}
    If $g$ is a symplectic order two automorphism of a K3 surface $S$, then $\rho(S)>8$.
\end{prop}

\section{Quartic surfaces with Picard rank two}
In this section we will explore properties about smooth quartic surfaces with Picard number two and their automorphisms. We start by proving that a basis of the Picard lattice containing the hyperplane class can always be chosen.

\begin{lem}\label{PicK3}
Let $S$ be a K3 quartic surface with $\rho(S)=2$ and  $i_{H}\colon S \hookrightarrow \mathbb{P}^{3}$ the embedding induced by a very ample divisor $H$, $H^{2}=4$.
Then, we can write
$Pic(S)=\mathbb{Z}H\oplus\mathbb{Z}W$ for some divisor class $W$ and thus the intersection product is given by the following matrix
\begin{equation} \label{intermatrix}
Q =\begin{pmatrix}

 4& b \\ 
b & 2c
\end{pmatrix}.\end{equation} 
\end{lem}
\begin{proof}
    Let $\{U,V\}$ be a basis of $Pic(S)$. We notice that $H$ is a primitive element of the lattice, otherwise if $H=kZ$, where $k \in \mathbb{Z}$ and $Z\in Pic(S)$, $k^{2}Z^{2}=4$, which is only possible for $k=1$ since $Z^{2}$ is an even number.

   Therefore, we can write $H= \alpha U + \beta V$, where $g.c.d(\alpha, \beta)=1$ and therefore there are integers $\gamma$ and $\delta$ such that $\delta \alpha+ \gamma \beta =1$. Let $W:= -\gamma U+ \delta V $.
The matrix
\begin{equation*}
 A=\begin{pmatrix}
 \alpha & \beta \\ 
- \gamma & \delta 
\end{pmatrix}
\end{equation*} 
is invertible over $\mathbb{Z}$, which implies that $\{ H, W \}$ is a basis of $Pic(S)$.
After this change of basis, the intersection matrix is the desired one. 
\end{proof}
Next, we establish a necessary and sufficient condition for these surfaces to have $2k$-divisors. We give a more general statement for rank two lattices.
\begin{lem} \label{latticecondition}Consider a lattice  $L=\mathbb{Z}h_1 \oplus\mathbb{Z}h_2$ with bilinear form given by the matrix $Q$ in \eqref{intermatrix}.
Let $r := - discr(L)$ and $k\neq 0$ an integer number. Then, the following assertions hold:
\begin{enumerate}
\item {For any $v\in L$ we have that $4v^2=(v\cdot h_1)^2-rn^2$, for some integer $n$.}
    \item There are vectors $v \in L$ such that $v^{2}=0$ if and only if r is a square;
    \item There are vectors $v \in  L$ such that $v^2 = 2k$ if and only if the Generalized Pell equation $x^2 - ry^2 = 8k$ has integer solutions. 
\end{enumerate}
\end{lem}
\begin{proof}
{Note that assertions $(2)$ and $(3)$ follow from $(1)$. Now, we prove $(1)$.\\}
Let $v= mh_1+nh_2$ for some $n,m \in \mathbb{Z}$.
Then $d:= v \cdot h_1= 4m+bn\Longrightarrow m= \dfrac{d-bn}{4}.$
So, we can write
$$v^{2}= \dfrac{d^{2}-2bnd+(bn)^{2}}{4}+\dfrac{bnd-(bn)^{2}}{2}+2cn^{2}$$
and so
$$4v^{2}=d^{2}-n^{2}(b^{2}-8c)\Longrightarrow 4v^2=d^{2}-rn^{2},$$
which proves the result.
\end{proof}
 By Theorem \ref{existenciak3}, given an even lattice of
signature $(1,1)$, it occurs as the Picard Lattice of some K3 surface. In the following result, we add conditions on the lattice so that this K3 surface is in fact a quartic in $\mathbb{P}^{3}$.
\begin{prop}\label{k3quartica}
Let $L$ be a rank two lattice of signature $(1,1)$, with $|disc(L)|>8$ and representing $4$ (i.e., there is an element $D\in L$ such that $D^2=4$). Then, for any K3 surface $S$ such that $Pic(S)\cong L$, there is a very ample divisor $H \in Pic(S)$, such that $H^{2}=4$ and the induced embedding $i_{H}\colon S \hookrightarrow \mathbb{P}^{3} $ is an isomorphism onto a smooth quartic surface.
\end{prop} 

\begin{proof}
Let $S$ be a $K3$ surface such that $Pic(S)\cong L$ and $H\in Pic(S)$ with $H^2=4$.
We want to apply Theorem \ref{Mori} to guarantee that $H$ is a very ample line bundle and therefore induces and embedding  of $S$ as a quartic in $\mathbb{P}^{3}$.
For that, first we have to prove that
$H$ is a nef divisor.
We consider $\Delta= \{b \in Pic(S) \text{ | } b^{2}=-2 \text{ and $b$ is effective} \}$, given $b \in \Delta$, we can define a Picard-Lefchetz reflection $s_b$ as
\begin{align*}
    s_b \colon Pic(S) &\to Pic(S)  \\
             D &\mapsto D+(b\cdot D)b
\end{align*}

By \cite[chapter VIII, proposition 3.9]{BCV}, the closure $\overline{Amp(S)} \cap P(S)$ is a strict fundamental domain for the action of the group Picard-Lefchetz reflections on $P(S)$.
 This means that, given $s_b(H)$,
 there is a nef divisor $D'$ and $b' \in \Delta$, such that $s_b(H)= s_b'(D')$, since the Picard-Lefchetz reflections are isometries,
 we have $H^{2}=D'^{2}=4$ and, after replacing $H$ with $D'$ if necessary, we can
 suppose that $H$ is nef.
 
Since $H$ is a primitive element of the lattice, $Pic(S)=\langle H,W\rangle$, for some element $W$, and the intersection matrix is given by $Q$ in (\ref{intermatrix}). Note that the second condition of Theorem \ref{Mori} is immediately satisfied. Suppose there is an irreducible curve $E$ with $(E^{2},H \cdot E)\in\{(0,1),(0,2),(-2,0)\}$. Consider the sublattice $P:=\langle H, E\rangle$ of $Pic(S)$.
This is a rank two lattice with discriminant $4E^2-(H\cdot E )^2=-1,-4,-8$, respectively. By Lemma \ref{latticecondition},  $4E^2=(H\cdot E)^2-rn^2$, for some $n\in\ZZ$. This implies that $r$ divides $1,4$ and $8$ respectively. We arrive to a contradiction since $r>8$.
Therefore, we conclude that all conditions of Theorem \ref{Mori} are satisfied and $H$ is very ample. Finally, the image of $i_H$ is a quartic of $\PP^3$ since $H^2=4$. 
\end{proof}

\subsection{Automorphisms of K3 surfaces with Picard number two}

Let $S$ be a smooth quartic surface. We define $G\subset Aut(S)$\label{group G} as the subgroup of all automorphisms $g$ such that $g^*\omega_S=\pm\omega_S$. The important property of this group is that we can determine it via isometries of $Pic(S)$. This follows from Theorem \ref{gluing} and Theorem \ref{Torelli}. Indeed, every element of $G$ corresponds to an isometry of $Pic(S)$ preserving $Amp(S)$ and whose action on $A(Pic(S))$ is $\pm Id$. By \cite[Theorem 10.1.2]{Nikulin3}, $G$ is a finite index subgroup of $Aut(S)$. If $\rho(S)=2$, the group $G$ is described by the theorem below, which was proved by Galluzzi, Lombardo, and Peters in \cite[Proposition 3, Proposition 4]{Galluzzi}.
 
 \begin{thm}\label{GLP}
Let $S$ be a smooth quartic surface with Picard number two and let $G\subset Aut(S)$ be the subgroup defined above. If there is a nontrivial divisor $D$ on $S$ such that $D^{2}=0$ or $D^{2}=-2$, then $G$ is either trivial or $\mathbb{Z}_2$. If $S$ contains no such divisors, then $G$ is either  $\mathbb{Z}$ or
$\mathbb{D}_{\infty}=\ZZ_2\ast\ZZ_2$.
\end{thm}

%As a consequence of this result, the group $G$, and hence $Aut(S)$, is finite if and only if $Pic(S)$ represents $0$ or $-2$. Moreover, with the following proposition we can determine completely the group $G$ by verifying the existence of integer solutions of the corresponding Pell equations $x^2-ry^2=-8k$, for $k=-1,0,1$. 
By this previous result, non-trivial automorphisms of $G$ are either of order two or of infinite order. Next, we give a necessary and sufficient condition for $S$ to admit involutions. For an automorphism $g$ we define the invariant lattice $L_g:=\{x\in H^2(S,\ZZ)|g^*x=x\}$.

\begin{prop}\label{nikinvolutions}Let $S$ be a smooth quartic surface with $\rho(S)=2$. Then, $S$ admits an involution $g$ if and only if there exists an ample divisor $D$ with $D^2=2$. In this case, $L_g=\langle D\rangle$.   
\end{prop}
\begin{proof}
    Assume that $S$ admits an involution $g$. Since it has order two, $g^*\omega_S=\pm\omega_S$. By Proposition \ref{simpleticinv},  $g^*\omega_S=-\omega_S$. Notice that $L_g\subset Pic(S)$ and it has rank one. Indeed, for any element $x\in L_g$ the equality $\langle x, \omega_S\rangle=\langle g^*x, g^*\omega_S\rangle=-\langle x, \omega_S\rangle$ implies that $\langle x,\omega_S \rangle=0$
    and so $x\in Pic(S)$. Using \cite[Theorem 1.2]{Lee}, we can compute that $1$ and $-1$ are the eigenvalues of $g^*$ on $Pic(S)$ and so $L_g$ has rank one and it is generated by a divisor $D$. For any ample class $A$, $A+g^*A$ is ample and it is preserved by $g^*$. Then, $D$ is ample since $A+g^*A$ is a multiple of it. Then, $D^{2}=2m>0$, and $\frac{D}{2m} \in A(L_g)$, which is a contradiction for $m\neq 1$, since by \cite[Theorem 2.1]{AST} , $L_g$ is $2$-elementary, so $D^2=2$.

    Conversely, assume that there is an ample divisor $D\in Pic(S)$ such that $D^2=2$. Since $S$ is a smooth K3 surface, results of Saint-Donat \cite[Prop 2.6, Thm 3.1, 5.1]{SaintDonat} establish that  the rational map induced by the complete linear system $|D|$ identifies $S$ as a double cover of $\mathbb{P}^2$ branched along a sextic curve and it admits in a natural way an involution. 
\end{proof}

\begin{defi}
 A K3 surface $S$ admitting only symplectic or antisymplectic automorphisms is called \emph{Aut-general}.
\end{defi}

\begin{rmk}\label{autgeral} By Theorem \ref{existenciak3}, for any even lattice $L$ with signature $(1,1)$ there is a K3 surface $S$ with Picard lattice $L$, and we can always take such surface to be Aut-general. In fact, a general member in the $18$-dimensional family of K3's with Picard lattice $L$ is Aut-general (see for instance \cite[Theorem 10.1.2 c)]{Nikulin3}). In this case, the group $G$ is the whole group $Aut(S)$.
\end{rmk}

\section{Gizatullin's problem in Picard rank two}
In this section, we explore Gizatullin's problem for quartic K3 surfaces with Picard number two.
Our goal is to find a sufficient condition on the Picard lattice so that no element of infinite order of $Aut(S)$ is derived from $Bir(\mathbb{P}^{3})$ under any embedding $ S\hookrightarrow \mathbb{P}^3$. With this in mind, we recall a result by Oguiso \cite[Theorem 3.2]{Oguiso1}. The second part follows from a result of Takahashi \cite{Takagashi}.

\begin{thm}\label{oguiso}
     Let $S \subset \mathbb{P}^3$ be a smooth quartic K3 surface. Then:
     \begin{itemize}
         \item Any automorphism $g$ of infinite order is not the restriction of an automorphism of the ambient space $\mathbb{P}^3$.
         
         \item Assume that $S$ contains no curves of degree less than 16 except for complete intersections of the surface $S$ with hypersurfaces of $\mathbb{P}^3$. Then no automorphism of $S$ is the restriction of an element of $Bir(\mathbb{P}^{3}) \backslash Aut(\mathbb{P}^{3})$.
     \end{itemize}
\end{thm}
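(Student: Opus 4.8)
The plan is to prove the two assertions by quite different methods: the first is a short argument about algebraic group actions on a K3 surface, while the second reduces, after restricting a Cremona transformation to $S$, to a quantitative study of homaloidal systems on $\mathbb{P}^3$ — which is the step I expect to be the real obstacle.

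For the first assertion I would argue as follows. Suppose $g\in Aut(S)$ is the restriction of some $\sigma\in Aut(\mathbb{P}^3)=PGL_4(\mathbb{C})$. The stabiliser $G:=\{\sigma\in PGL_4(\mathbb{C}):\sigma(S)=S\}$ is Zariski-closed in $PGL_4(\mathbb{C})$, hence a linear algebraic group, and restriction gives a morphism of groups $G\to Aut(S)$. Since $S$ is a K3 surface, the symplectic form yields $T_S\cong\Omega^1_S$, so $H^0(S,T_S)\cong H^0(S,\Omega^1_S)=0$; hence $Aut(S)$ is discrete, so the connected component $G^{\circ}$ acts trivially on $S$. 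As $S$ spans $\mathbb{P}^3$, an element of $PGL_4(\mathbb{C})$ fixing $S$ pointwise is the identity, so $G^{\circ}=\{1\}$ and $G$ is finite. Thus every automorphism of $S$ that extends to $\mathbb{P}^3$ has finite order, which is the first assertion.

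For the second assertion, suppose $g\in Aut(S)$ is the restriction — under the given embedding $S\subset\mathbb{P}^3$ — of some $\Phi\in Bir(\mathbb{P}^3)$; the goal is to show $\Phi\in Aut(\mathbb{P}^3)$. Let $d=\deg\Phi$ and let $\mathfrak{d}=\Phi^{*}|\mathcal{O}_{\mathbb{P}^3}(1)|\subset|\mathcal{O}_{\mathbb{P}^3}(d)|$ be the associated homaloidal system, whose general member is an irreducible degree-$d$ surface. I would resolve $\Phi$: take a smooth projective threefold $X$ with birational morphisms $p,q\colon X\to\mathbb{P}^3$ such that $q\circ p^{-1}=\Phi$, and let $\widetilde S\subset X$ be the proper transform of $S$, so that $p_S:=p|_{\widetilde S}$ and $q_S:=q|_{\widetilde S}$ are birational morphisms $\widetilde S\to S$ with $q_S\circ p_S^{-1}=g$. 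Writing $H=\mathcal{O}_{\mathbb{P}^3}(1)$, one has $q^{*}H=d\,p^{*}H-E$ in $Pic(X)$ for an effective $p$-exceptional divisor $E$. Restricting this identity to $\widetilde S$ and pushing it forward by $q_S$ — observing that any curve contracted by $p_S$ is also contracted by $q_S$, since the isomorphism $g=q_S\circ p_S^{-1}$ cannot expand a point to a curve — one gets in $Pic(S)$ the identity
\[
g^{*}H_S=d\,H_S-\Gamma,
\]
where $H_S=\mathcal{O}_S(1)$ (so $H_S^{2}=4$) and $\Gamma\ge 0$ is the fixed part of the restricted linear system $\mathfrak{d}|_S$; in particular $\Gamma$ is supported on the curves of the base locus of $\Phi$ lying on $S$. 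Since $g^{*}H_S$ is ample with $(g^{*}H_S)^{2}=4<4d^{2}$, one has $\Gamma\ne 0$ whenever $d\ge 2$.

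It then remains to reach a contradiction when $d\ge 2$, and here the hypothesis on $S$ must be used. First, $\Gamma$ is not a complete intersection of $S$ with a hypersurface: an effective curve on $S$ is such a complete intersection exactly when its class lies in $\mathbb{Z}_{>0}H_S$, and $\Gamma\sim kH_S$ would force $g^{*}H_S=(d-k)H_S$ of square $4$, hence $g^{*}H_S=H_S$; but then $g$ is induced by $PGL_4(\mathbb{C})$, and one shows separately that $\Phi$ must then be linear (composing $\Phi$ with the inverse of that linear automorphism gives a birational self-map of $\mathbb{P}^3$ restricting to the identity on $S$, which one checks must be the identity). Second — and this is the hard part — one needs the bound $\deg_{H_S}\Gamma=4d-H_S\cdot g^{*}H_S<16$; this comes from a careful analysis of the base locus of a non-linear Cremona transformation of $\mathbb{P}^3$, using the numerical relation $(d\,p^{*}H-E)^{3}=1$ forced by birationality together with the Hodge index inequality $(H_S\cdot g^{*}H_S)^{2}\ge H_S^{2}\,(g^{*}H_S)^{2}=16$ on $Pic(S)$; it is the technical core and is the content of Takahashi's contribution. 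Granting it, $\Gamma$ is a curve on $S$ of degree less than $16$ that is not a complete intersection, contradicting the hypothesis; hence $d=1$ and $\Phi\in Aut(\mathbb{P}^3)$. The main obstacle is precisely this last quantitative estimate; everything else in the argument is formal.
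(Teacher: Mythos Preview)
The paper does not prove this statement: it is quoted as Theorem~3.2 of Oguiso's paper, with the second item attributed to Takahashi. There is therefore no in-paper argument to compare against, so I comment only on your sketch.

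Your argument for the first item is correct and standard.

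For the second item the outline is sound and you correctly locate the hard step in Takahashi's Noether--Fano estimate, but your treatment of the case ``$\Gamma$ is a complete intersection'' does not close. You reduce it to the claim that a Cremona map $\Psi=\sigma^{-1}\Phi$ with $\Psi|_S=\mathrm{id}_S$ must be the identity, and assert that ``one checks'' this. However, feeding the pair $(\Psi,\mathrm{id}_S)$ back into your own scheme reproduces exactly the situation you started from: the fixed divisor is $(e-1)H_S$, again a complete intersection, so the argument is circular. Even granting your bound $\deg\Gamma<16$, in this case it only yields $4(d-1)<16$, i.e.\ $d\le 4$, not $d=1$. Thus the assertion that ``everything else in the argument is formal'' is not quite right. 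In the actual Takahashi--Oguiso argument the Noether--Fano analysis, applied to the pair $(\mathbb{P}^3,\tfrac{4}{d}\mathfrak{d})$, directly produces (when $d>1$) an irreducible curve on $S$ of degree less than $16$ whose class is not a multiple of $H_S$; the complete-intersection case never has to be isolated. You should either invoke Takahashi's result in that stronger form, or supply an independent proof that the inertia group of a smooth quartic in $Bir(\mathbb{P}^3)$ is trivial.
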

As a consequence of the above theorem, we obtain a criterion for Gizatullin's problem using the discriminant of the Picard lattice.
 
\begin{thm}\label{PaivaQuedoidea} 

Let $S$ be a K3 quartic surface with $\rho(S)=2$ such that $-disc(S)=r> 233$.
Then no element of $Aut(S)$ is derived from $Bir(\mathbb{P}^{3} )\backslash Aut(\mathbb{P}^{3})$ under any embedding $ S \hookrightarrow \mathbb{P}^{3}$.
Moreover, no element of infinite order of $Aut(S)$ comes from  $Bir(\mathbb{P}^{3} )$.
\end{thm}
\begin{proof} Let $\phi: S \hookrightarrow \mathbb{P}^3$ be an embedding of $S$ in $\mathbb{P}^3$ and  ${H}$ the hyperplane class on $S$ under this embedding. 
    We will prove that every curve $C \subset S$ that has degree $d <16$ with respect to the embedding $\phi$ is of the form $C = S\cap T$, where $T$ is a hypersurface of $\mathbb{P}^{3}$.

Let $C$ be a curve on $S$ with degree $d:=H\cdot C<16$. By Lemma \ref{PicK3}, and Lemma \ref{latticecondition}, $C=mH+nW$ for some divisor $W$ and integers $m,n$; and $4C^2=d^2-rn^2$. From Riemann-Roch we know that $-2\leq 2p_a(C)-2=C^2$, where $p_a(C)$ is the arithmetic genus of $C$. Then
$$233n^{2}<d^{2}+8. $$

Since $d\leq 15$, we have $d^{2}+8 \leq 233$. As $n $ is an integer number, the above inequality is satisfied only if $n=0$. Therefore, $C=mH$ in $Pic(S)$.

The following exact sequence

$$0 \longrightarrow \mathcal{O}_{\mathbb{P}^{3}}(-4+m) \longrightarrow \mathcal{O}_{\mathbb{P}^{3}}(m) \longrightarrow \phi_{*}\mathcal{O}_{S}(m)\longrightarrow 0$$
induces the following one
$$H^{0}(\mathbb{P}^{3}, \mathcal{O}_{\mathbb{P}^{3}}(m)) \longrightarrow H^{0}(\mathbb{P}^{3},\phi_{*}\mathcal{O}_{S}(m))\longrightarrow H^{1}(\mathbb{P}^{3}, \mathcal{O}_{\mathbb{P}^{3}}(-4+m)). $$
Since $H^{1}(\mathbb{P}^{3}, \mathcal{O}_{\mathbb{P}^{3}}(l))=0\  $ for all $l \in \mathbb{Z}$, the map $H^{0}(\mathbb{P}^{3}, \mathcal{O}_{\mathbb{P}^{3}}(m)) \longrightarrow H^{0} (\mathbb{P}^{3}, \phi_{*}\mathcal{O}_{S}(m))$ is surjective, which implies that $S$ contains no curve of degree less than 16 which is not the complete intersection of the surface $S$ and a hypersurface $T$ of $\mathbb{P}^3$. Using Theorem \ref{oguiso}, we have the desired result.

\end{proof}
\begin{cor}\label{answeroguiso}
  Let $S$ be a K3 quartic surface as in the above theorem and let $G\subset Aut(S)$ be the subgroup define in $\S$\ref{group G}. Then, no non-trivial element of $G$ comes from $Bir(\mathbb{P}^{3})$ under any embedding $S \hookrightarrow \mathbb{P}^{3}$. Moreover, if
  $S$ is Aut-general, the same is true for the whole group $Aut(S)$.
\end{cor}

\begin{proof}
 First, we recall that, by Theorem \ref{GLP}, the possibilities for $G$ are: trivial, $\ZZ_2$, $\ZZ$ and $\mathbb{D}_{\infty}$. The proof is obvious if $G$ is trivial. If $G=\ZZ$, the result follows immediately from Theorem \ref{PaivaQuedoidea}. Thus, let us assume that $G=\ZZ_2$ or $G=\mathbb{D}_{\infty}$.
 
 Let $\phi: S \hookrightarrow \mathbb{P}^3$ be an embedding of $S$ in $\mathbb{P}^3$ and $H$ the very ample divisor inducing $\phi$. Suppose by contradiction there is a non-trivial automorphism $g\in G$ that is induced by an element of $Bir(\mathbb{P}^{3})$.
By Theorem \ref{PaivaQuedoidea}, it has finite order and it must be induced by an element of  $Aut(\mathbb{P}^{3})$.  Then,  $g$ is an involution preserving the hyperplane class, i.e., $g^*H=H$. By Proposition \ref{nikinvolutions}, 
$H$ is a multiple of a divisor $D$ with $D^2=2$, which is contradiction since $H$ is a primitive element of the Picard lattice. 

Finally, if $S$ is Aut-general, the result follows from the fact that $Aut(S)=G$. 
\end{proof}

\section{Examples}

In this section, we describe examples of smooth quartic surfaces for which no nontrivial automorphism of the surfaces is induced by any Cremona transformation under any embedding. In particular, these surfaces have finite order automorphisms for which the answer to Question \ref{Oguisoquestion}, posed by Oguiso, is negative. More precisely, on the one hand, we construct a family $S_n \subset \mathbb{P}^{3}$ of smooth quartic surfaces with $\rho(S_n)=2$ such that $Aut(S_n)\cong\mathbb{D}_\infty$ and, on the other hand, we construct a rank two smooth quartic surface $S\subset\PP^3$ with
$Aut(S)\cong\ZZ_2$.
\begin{ex}
    For $n\geq 2$,  consider the rank two lattice $L_n$ with intersection matrix given by 

\begin{equation*}
 Q_n=\begin{pmatrix}
4 & 8n \\
8n & 2 
\end{pmatrix}.
\end{equation*}
Note that $r:=-disc(L_n)=8(8n^2-1)>233$. Since $L_n$ satisfies the hypothesis of Proposition \ref{k3quartica}, there exists a smooth quartic surface $S_n$ such that $Pic(S_n)\cong L_n$. Moreover, we can choose $S_n$ to be Aut-general by Remark \ref{autgeral}. By Corollary \ref{answeroguiso}, no nontrivial element of $Aut(S_n)$ is induced by an element of $Bir(\mathbb{P}^3)$ under any embedding $S_n\hookrightarrow \mathbb{P}^3$. It remains to see that $Aut(S_n)=\mathbb{D}_{\infty}$ and for this we use Lemma \ref{latticecondition}.  We have that $r$ is not a square number,
and the Generalized Pell equation $ x^{2}- ry^{2}=-8$ does not have solutions. Indeed, by rewriting the equation as $ x^{2}-8(8n^{2}-1)y^{2}=-8$ we note that $x^{2}$ is a multiple of 8. Since it is a square number, it is also a multiple of 16 and so $x^{2}=16x'^{2}$, for some integer $x'$. Thus, the equation $x^{2}-8(8n^{2}-1)y^{2}=-8$ has integer solutions if and only if 
    \begin{equation}
    \label{eq}
            2x'^{2}-(8n^{2}-1)y^{2}=-1
    \end{equation}
     has integer solutions.
     Let us see that the last equation does not have solutions modulo 8. We recall that there are 3 possibilities for a square number modulo 8:
    \begin{itemize}
        \item If $a\equiv 2 \text{ or } 6 \mod{8} \Longrightarrow a^{2} \equiv  4 \mod{8}$.
        \item If $a\equiv 0 \text{ or } 4 \mod{8} \Longrightarrow a^{2} \equiv  0 \mod{8}$.
        \item If $a$ is odd  $\Longrightarrow a^{2} \equiv  1 \mod{8}$.
    \end{itemize}
If $x'$ is an even number, the equation (\ref{eq}) is reduced to $y^{2}=7 \mod{8}$, and if $x'$ is an odd number, the equation (\ref{eq}) is reduced to $y^{2}=5 \mod{8}$. In both cases the equation does not have solutions. 

Therefore, the Picard lattice does not have elements with self-intersection $0$ or $-2$, and clearly there is a divisor $D$ with $D^2=2$. Then, $D$ or $-D$ is ample by Proposition \ref{amplecriterio}. Therefore, the assertion follows from Theorem \ref{GLP} and Proposition \ref{nikinvolutions}.  
\end{ex}

\begin{ex}
   In a similar way, consider the rank two lattice $L$ with intersection matrix given by

\begin{equation*}
 Q=\begin{pmatrix}
4 & 17 \\
17 & 2 
\end{pmatrix}
\end{equation*}
and denote by $r:= - discr(L)=281>233$.
We can prove that there is a quartic K3 surface $S\subset\mathbb{P}^3$ such that $L$ is its Picard lattice, $Aut(S)=\mathbb{Z}_{2}$
and no nontrivial element of $Aut(S)$ is induced by an element of $Bir(\mathbb{P}^3)$ under any embedding $S\hookrightarrow \mathbb{P}^3$.  
The reasoning is similar to the previous case, the only difference is that, in this case, the Generalized Pell equation $x^{2}-281y^{2}=-8$ has solutions, for example (251999,15033).\\
\end{ex}

\end{document}